\newcommand{\argmin}{\mathop{\arg\!\min}}
\def \RR {\mathbb R}
\newcommand{\EE}{\mathbf{E}}
\def\EE{\mathbb E}
\def\e{\varepsilon}
\newtheorem{assumption}{Assumption}
\newtheorem{theorem}{Theorem}
\title{\LARGE \bf
An Accelerated Second-Order Method for 
Distributed Stochastic Optimization
%\textcolor{orange}{\st{Centralized   Distributed}}   \textcolor{orange}{Federated Learning}  Stochastic Optimization*
}
\author{Artem Agafonov, Pavel Dvurechensky, Gesualdo Scutari, Alexander Gasnikov, \\ Dmitry Kamzolov, Aleksandr Lukashevich, and Amir Daneshmand% <-this % stops a space
%\thanks{*This work was not supported by any organization}% <-this % stops a space
\thanks{
A.A. and D.K. are with the Moscow Institute of Physics and Technology, 
Russia
{\tt\small ({agafonov.ad@phystech.edu}, {dkamzolov@yandex.ru})}. 
P.D. is with the Weierstrass Institute for Applied Analysis and Stochastics, 
Germany {(\tt\small pavel.dvurechensky@wias- berlin.de)}.
G.S. and A.D. are with the School of Industrial Engineering, Purdue University, 
USA {\tt\small ({gscutari@purdue.edu}, {adaneshm@purdue.edu})}.
A.G. is with  Moscow Institute of Physics and Technology, Russia, 
Institute for Information Transmission Problems RAS, 
Russia,  
Higher school of economics, 
Russia {(\tt\small gasnikov@yandex.ru)}.
A.L. is with the Skolkovo Institute of Science and Technology,
Russia,
Russia {(\tt\small aleksandr.lukashevich@skoltech.ru
)}.
}%
}
\begin{document}

\maketitle
\thispagestyle{empty}
\pagestyle{empty}

%%%%%%%%%%%%%%%%%%%%%%%%%%%%%%%%%%%%%%%%%%%%%%%%%%%%%%%%%%%%%%%%%%%%%%%%%%%%%%%%
%
\begin{abstract}
We consider distributed stochastic optimization problems that are solved with master/workers computation architecture. Statistical arguments allow to exploit statistical similarity and approximate this problem by a finite-sum problem, for which we propose an inexact accelerated cubic-regularized Newton's method that achieves lower communication complexity bound for this setting and improves upon existing upper bound. We further exploit this algorithm to obtain convergence rate bounds for the original stochastic optimization problem and compare our bounds with the existing bounds in several regimes when the goal is to minimize the number of communication rounds and increase the parallelization by increasing the number of workers.  
%inding the global minimum of convex (moreover, nondifferentiable) functions is one of the main and most difficult problems in modern optimization. Such problems arise in many applied sciences, from physics to deep learning. In this paper, we consider a certain class of "good"{} non-convex functions that can be bounded above and below by a parabolic function. We show that using only the zeroth-order oracle, one can obtain the linear speed $\log \left(\nicefrac{1}{\varepsilon}\right)$ of finding the global minimum on a cube. 
% In this article, we \ag{investigate
% }
% % consider
% an accelerated first-order method, namely, the method of similar triangles, which is optimal in the class of convex (strongly convex) problems with a \ag{Lipschitz} gradient. The paper considers a model of additive noise in a gradient and a Euclidean prox-structure \ag{for not necessarily bounded sets}. Convergence estimates are obtained in the case of strong convexity and its absence, and a stopping criterion is proposed for not strongly convex problems.

\keywords{stochastic optimization, statistical similarity, distributed optimization}
\end{abstract}

\section{Introduction}
Distributed optimization lies on the interface between control and optimization with the goal being to find a minimum of some global objective by a network of agents, each of which has access to a local part of the objective and can interact with only neighbouring agents. 
Many algorithms for this setting under different assumptions were proposed as early as in 1970s \cite{bor82,tsi84,deg74}. Moreover this setting has many applications including robotics, resource allocation, power system control, control of drone or satellite networks, distributed statistical inference and multiagent reinforcement learning \cite{xia06,rab04,ram2009distributed,kra13,nedic2017fast}.  
An important recent application is training large-scale machine learning models, which in the language of optimization requires to solve distributed stochastic optimization problems. 
%\cite{bot10,boy11,aba16,ned16w,ned15}

%\todo{Maybe we need a paragraph on why distributed optimization is important, how it is connected to learning community, cite some classical papers.} 

This paper focuses on distributed stochastic optimization problems using master/workers architectures. These computational architectures are common, e.g., in the context of federated learning \cite{woodworth2018graph,kairouz2019advances}, where for privacy-preserving purposes the dataset is split across multiple workers and computations are coordinated by the master node. %This approach allows to scale-up computations and learn modern large-scale Machine Learning (ML) models using distributed datasets. %In particular, recent advances in this direction are connected with the approach called Federated Learning \cite{woodworth2018graph,kairouz2019advances}, where each worker runs some optimization method and then the master node averages the iterates obtained by the workers.
To be more precise, we consider  the following general stochastic optimization problem:
\begin{equation}\label{StocProb}
    \min_{x\in\RR^d}\boldsymbol{F}(x):=\EE_{\xi}{f(x,\xi)},
\end{equation}
where $\xi$ is a random variable, e.g. random data, $f$ is convex and sufficiently smooth, which implies that $\boldsymbol{F}$ is convex. 
We assume that we have access to $m$ workers, $T$ rounds of communications (all to all or to the master node), and a total fixed budget of $N$ realizations of $\xi$. Under this assumption the main question is how small we can make the error $\EE{\boldsymbol{F}(x^T)} - \boldsymbol{F}(x^*)$ by different algorithms returning a random point $x^T$.
Here $x^*$ denotes a solution to \eqref{StocProb}.

%and $n$ -- number of local steps at each node in between communication rounds, meaning that between two consecutive communication rounds each worker has access to $n$ stochastic gradients  $\nabla f(x,\xi)$. Under this assumption the total budget of realizations of $\xi$ is fixed to be equal to \aa{$N$} and the main question is how small we can make the error $\EE{\boldsymbol{F}(x^T)} - \boldsymbol{F}(x^*)$ by different algorithms returning a random point $x^T$.

To solve \eqref{StocProb} on master/workers architectures, two main approaches are used \cite{nemirovski2009robust,shapiro2014lectures,dvinskikh2020stochastic}, namely Stochastic Approximation (SA) and Sample Average Approximation (SAA), a.k.a.  Monte-Carlo. 
The division between SA and SAA approaches is made for simplicity and there are algorithms (see \cite{li2020optimal} and Appendix \ref{S:SOTA}) which are based on the SAA idea, but, in fact, use a small number of stochastic gradient for each realization of $\xi$, which makes them quite close to SA algorithms. Moreover, in most cases, we are given a dataset and it is our choice whether we see each example once as in the SA approach or multiple times as in the SAA approach. %\textcolor{orange}{are you sure about that? SA is based on iid assumption over the entire iterates, this is not the case for SAA.} \pd{Pavel: I think yes: if we make one pass over the dataset, then we will have iid observations as in SA approach.} \textcolor{orange}{yes, I agree, for the specific setting where you do not revisit the data}
In this paper, we make an attempt to look at SA and SAA approaches from a unified perspective of the actual goal being to solve the stochastic optimization problem \eqref{StocProb} with some fixed budget $N$ of realizations of random variable $\xi$.

\subsubsection{Stochastic Approximation}

% Modern data since problems have to deal with a lot of data. So distributed algorithms are used \cite{nedic2020distributed}. It is well known that many of data since problems reduce to stochastic optimization problem of the type \cite{shalev2014understanding}
% \begin{equation}\label{StocProb}
%     \min_{x\in\RR^d}f(x):=\EE{f(x,\xi)}.
% \end{equation}
% In this paper we additionally assume that \eqref{StocProb} is convex and sufficiently smooth.

%For SA approach we consider a Federated Learning type centralized architecture \cite{woodworth2018graph,kairouz2019advances}. 
% Consider Federated Learning type of communication network architecture for smooth stochastic convex optimization problem $$\min_{x\in\RR^d}F(x):=\EE{f(x,\xi)}.$$
%In this setting the main assumptions are that we have access to $m$ workers, $T$ rounds of communications (all to all or to the master node) and $n$ -- number of local steps at each node in between communication rounds, meaning that between two consecutive communication rounds each worker has access to $n$ stochastic gradients  $\nabla f(x,\xi)$. 
% First we briefly overview the state of the art and then describe our suggestions on improving upon the existing results. 
In the SA approach a typical situation is that the total budget of $N$ realizations of $\xi$ is distributed between $T$ communication rounds and $m$ workers.
%each worker sequentially samples iid realizations of $\xi$ and corresponding stochastic gradient, and makes stochastic gradient step. 
This leads to so-called intermittent communications with $n=N/(mT)$ local stochastic gradient steps by each worker in between communication rounds, meaning that between two consecutive communication rounds each worker has access to $n$ iid stochastic gradients $\nabla f(x,\xi)$.
The authors of \cite{woodworth2021minmax} recently obtained for the setting of smooth optimization the following lower bound:\footnote{If $f$ is a quadratic the last term can be eliminated \cite{woodworth2020minibatch} and such bound is tight.}\footnote{Here and below we use $ \gtrsim$ and $\simeq$ 
  %in order not to write explicitly
  %$x^T$ is the best point generated by a method and 
  %we omitted 
  for simplicity to highlight the dependence on our main parameters $m,N,T$ and omit numerical multiplicative constants, logarithmic factors, and other parameters characterizing the problem, e.g. Lipschitz constants of the objective, its gradient, and Hessian, as well as estimates for the norm of the solution.} 
% \ag{I guess we should add here (and indicate that for simplicity we use this style below) some comments about the ignoring of all constants and log-factors...}
 \begin{equation}\label{Wood2021}
 \EE{\boldsymbol{F}(x^T)} - \boldsymbol{F}(x^*) \gtrsim  \frac{1}{\left(N/m\right)^2} + \frac{1}{\sqrt{N}} + \min\left\{\frac{1}{T^2},\frac{1}{\sqrt{N/m}}\right\}.
  \end{equation}
%   \ag{Let's use $\lesssim$ and $\gtrsim$ instead of $\le$ and $\ge$ in such inequalities...}
% In \cite{woodworth2021minmax} the following lower bound was obtained (we omit constant logarithmic parameters and different constant like Lipschitz (gradient) constant)\footnote{If $F$ is a quadratic the last term can be eliminated \cite{woodworth2020minibatch} and such bound is tight.}
%     $$\EE{F(x^N)} - F(x^*) \ge \frac{1}{R^2K^2} + \frac{\sigma}{\sqrt{MKR}} + \min\left\{\frac{1}{R^2},\frac{\sigma}{\sqrt{KR}}\right\}.$$
They show also that this bound is tight by showing that a special combination of Minibatched Accelerated SGD and Single-Machine Accelerated SGD achieve the bound \eqref{Wood2021}. 
%And from the other side \eqref{Wood2021} was obtained as a lower bound when assuming that $f$ is smooth (has Lipschitz gradient) and we have an access to $\nabla f(x,\xi)$ only at one point $x$, when $\xi$ is chosen. 
Under more restrictive assumptions on the smoothness of $f$ they obtain a lower bound which gives some room for improvement in the last term of \eqref{Wood2021}:

\begin{equation}\label{eq:lb_Lip_Hess}
    \begin{aligned}
        \min\left\{\frac{1}{T^2},\frac{1}{\sqrt{N/m}},\frac{1}{\left(N/m\right)^{1/4}T^{7/4}}\right\}\\
        \text{if Hessian is Lipschitz}, 
    \end{aligned}
\end{equation}

\begin{equation}\label{eq:lb_qscf}
    \begin{aligned}
     \min\left\{\frac{1}{T^2},\frac{1}{\sqrt{N/m}},\frac{1}{TN/m}\right\}\\
     \text{if $f$ is self-concordant}.   
    \end{aligned}
\end{equation}

% \begin{align}
% % &\min\left\{\frac{1}{T^2},\frac{1}{\sqrt{N/m}},\frac{1}{\left(N/m\right)^{1/4}T^{7/4}}\right\}~~\text{if Hessian is Lipschitz}, \label{eq:lb_Lip_Hess} \\
% \min\left\{\frac{1}{T^2},\frac{1}{\sqrt{N/m}},\frac{1}{TN/m}\right\}~~ \text{if $f$ is self-concordant}, \label{eq:lb_scf}
% % & \min\left\{\frac{1}{T^2},\frac{1}{\sqrt{N/m}},\frac{1}{\left(N/m\right)^{1/2}T^{3/2}}\right\} ~~ \text{ if $f$ is quasi-self-concordant}.   \label{eq:lb_qscf}
% \end{align}
\begin{equation}\label{eq:lb_qscf}
    \begin{aligned}
     \min\left\{\frac{1}{T^2},\frac{1}{\sqrt{N/m}},\frac{1}{\left(N/m\right)^{1/2}T^{3/2}}\right\}\\
     \text{ if $f$ is quasi-self-concordant}.   
    \end{aligned}
\end{equation}

% \textcolor{orange}{Maybe I will remove the next sentence, it is a detail and is not SA, which is what we are discussing in this subsection.} \pd{Pavel:On the one hand I agree. But, on the other hand, this is a kind of motivation for us and for the next part on SAA.} The authors of \cite{woodworth2021minmax} also discuss a potential of achieving a smaller error by considering the SAA approach since in this approach it is allowed to access  
% each realization of random function $f(x,\xi)$ for fixed $\xi$ and different values of $x$, which is a more restrictive assumption than the one under which the lower bound \eqref{Wood2021} was obtained.
% \todo{Check that the following bound also for federated stochastic optimization}
The authors of \cite{godichon2020rates}, under an additional assumption of stronger local smoothness of $f$ around $x^*$, propose an algorithm with only one round of communication ($T = 1$) with the following guarantee
%\footnote{Note, that from this bound we have that the first term equals the second one when $m\simeq \sqrt{N}$, where $N=mnT$. This means that we can fully parallel $N$ oracle calls on at most $m\simeq\sqrt{N}$ machines. The bound from \cite{godichon2020rates} was obtained under additional assumptions $T = 1$ and strong convexity of $f$ (so we properly correct the estimate).} 
\begin{equation}
\label{eq:godichon2020rates}
  \EE{\boldsymbol{F}(x^T)} - \boldsymbol{F}(x^*) \lesssim \frac{1}{N/m} + \frac{1}{\sqrt{N}},
\end{equation}
which, as we will see below, is similar to the SAA-based approach. 

% \ag{I guess it'd be better to add a definition of $x^*$...}
% \pd{Done above}

\subsubsection{Sample Average Approximation}

 The alternative  SAA approach \cite{shalev2009stochastic,daneshmand2021newton} is based on sampling in advance $N$ realizations of random function $f(x,\xi)$ and approximating the expectation in \eqref{StocProb} by a regularized finite-sum 
 %with regularization term with parameter $\mu \simeq 1/\sqrt{N}$ 
 \begin{equation}\label{MC}
      \min_{x\in\RR^d}F(x)=\frac{1}{N}\sum_{k=1}^N f(x,\xi^{k}) + \frac{\mu}{2}\|x - x_0 \|^2,%+ \frac{1}{\sqrt{mn}}\|x - x^0\|^2
 \end{equation}
 where $\|\cdot\|$ is the Euclidean norm.
If the regularization parameter $\mu \simeq 1/\sqrt{N}$, solving the problem $\eqref{MC}$ with sufficient accuracy, we obtain the solution  of $\eqref{StocProb}$ (see Sec. \ref{sec:2} for details). 
 %  or a problem with a similar sum, but with different strategy for choosing summation superscripts and regularization parameter, see  \cite{woodworth2018graph}.
%With such a change of objective, solving the problem $\eqref{MC}$ with increased accuracy, we obtain the solution  of $\eqref{StocProb}$ (see Sec. \ref{sec:2} for details). 
%  If the regularization parameter $\mu\simeq 1/\sqrt{\aa{N}} = \aa{1/\sqrt{mn}}$ \ag{I propose not to substitute here $N = mn$ since we typically don't use this relation in this subsection.} and problem \eqref{MC} is solved with accuracy $\varepsilon'=O(\mu\varepsilon^2)$, i.e. we find a point $x^T$ such that $\EE{F(x^T)} - F(x_F^*)\le \varepsilon'$, where $x_F^*$ is the solution to \eqref{MC}, then, for the original problem \eqref{StocProb}, we have $\EE{\boldsymbol{F}(x^T)} - \boldsymbol{F}(x^*)\le \varepsilon$ \cite{shalev2009stochastic}.
This motivates developing fast algorithms for problem \eqref{MC} with the ultimate goal being to obtain solution to the original stochastic optimization problem \eqref{StocProb}. 

For the SAA approach we assume that we have in total $N$  realizations $\{\xi^k\}_{k = 1}^N$ of the random variable $\xi$, $T$ communication rounds, and $m$ workers. Each worker can perform $n$ local steps between two communication rounds with each step using one gradient $f(x,\xi^{k})$ for a particular realization $\xi^{k}$. The difference with the SA approach is that it is possible to use the same realization $\xi^k$ in different local steps and also that, in general, $n\ne N/(mT)$.
 %So, the bounds which we write below are for an approximate solution to \eqref{StocProb}.} 
 %Note, the underlying assumption is that we have   access to $\nabla f(x,\xi)$ at several points $x$, when $\xi$ is chosen. 
 
 Although the SAA approach allows using each observation multiple times, there are Variance Reduced (VR) methods, which applied to the problem $\eqref{MC}$, typically use only a logarithmic number of  gradients for each observation $\xi^t$. 

% The best known result \aa{Fix}, when $m$ is large enough,  
%  %\ag{I'm not sure that we should use the word ``stochastic'' in this  context...} 
%  , which gives the following convergence rates 
 %(cf. \eqref{eq:godichon2020rates}) 
The following convergence rate for the original problem \eqref{StocProb} was obtained in \cite{woodworth2018graph} by using SVRG algorithm

 %\aa{depending on the relation between $n, m, T$} \todo{some parameters like $L_0$ are not defined}
%  %\todo{Just to be sure: these bounds are for the original stochastic problem \eqref{StocProb}, right?}
%  \begin{equation}\label{SVRG}
%       \EE{\boldsymbol{F}(x^T)} - \boldsymbol{F}(x^*) \le \frac{1}{\aa{N/m}} + \frac{1}{\aa{\sqrt{N}}},
%  \end{equation}
%  \aa{if $\dfrac{nTL_0^2}{L_1R^2} > m\log\left(\dfrac{nmT}{L} \right)$, and
%   \begin{equation}\label{SVRG2}
%       \EE{\boldsymbol{F}(x^T)} - \boldsymbol{F}(x^*) \le \aa{\frac{T}{\sqrt{N}} + \frac{\sqrt{T}}{\aa{\sqrt{mN}}}}.
%  \end{equation}
%  otherwise.}
\begin{equation}\label{SVRG}
    \EE{\boldsymbol{F}(x^T)} - \boldsymbol{F}(x^*) \lesssim  \exp \left(-\min\left\{\frac{nT}{\sqrt{N}}, \frac{mnT}{N}\right\}\right) + \frac{1}{\sqrt{N}}.
\end{equation}
% \ag{Note, \eqref{SVRG} assumes that since $nT$ is everywhere together in \eqref{SVRG}, we could communicate only one time $T=1$ at the end, see also \eqref{eq:godichon2020rates} that corresponds (up to a log-factors) to the same rate and communications requirements.}
%   Somewhere in between SA and SAA approach lie accelerated VR methods  \cite{li2020optimal} \ag{This is also true for SVRG  approach with \eqref{SVRG} convergence rate.} which actually use only a logarithmic number of stochastic gradients for each $\xi$ and allow to obtain the following bounds, which are no better than the above bound and the bound \eqref{eq:godichon2020rates} for the SA approach,
 %Accelerated versions VR methods \cite{lan2020first} lead to the rates which are no faster \cite{li2020optimal}:
 %The results \eqref{Wood2021} and \eqref{SVRG} describe the modern state of affairs.
 %For SAA approach we note, that described upper bound \eqref{SVRG} can not be improved by optimal VR-schemes for decentralized setup  (federated learning is a special case (centralized) of this general setup). 
 %From \cite{li2020optimal} we have only
Note, that parameters $n$ and $T$ appear in the inequality $\eqref{SVRG}$ only as a combination $nT$. Therefore, the minimum number of communication rounds $T$ is achieved when $T=1$. 
Further, from the bound $\eqref{SVRG}$ we see that when the number of workers $m$ increases, there is a limit for possible improvement in the bound.
Indeed, if $m\gtrsim \sqrt{N}$ the minimum in the exponent in \eqref{SVRG} is achieved on the first term, and no improvement in the bound is guaranteed. Moreover, the similar limit $m \simeq \sqrt{N}$ can be obtained via SA-based method from $\cite{godichon2020rates}$ with the guarantee $\eqref{eq:godichon2020rates}$.

%\aa{Note, that we are interested in minimizing the number of communication rounds $T$ and maximizing the number of workers $m$ at the same time. From the bound $\eqref{SVRG}$ we derive that $m \simeq \sqrt{N}$. Indeed, if $m\gtrsim \sqrt{N}$ that the minimum in $\eqref{SVRG}$ is achieved on the second argument. Therefore, subsequent increase in number of workers $m$ does not improve the convergence rate. The same result $m \simeq \sqrt{N}$ can be obtained from $\eqref{eq:godichon2020rates}$.}
 
 Accelerated (non distributed) Variance Reduced schemes  \cite{lan2020first} can not improve  the above bound \eqref{SVRG}. 
Non-accelerated distributed   Variance Reduced method from \cite{gorbunov2020local} applied to the special type of the problem $\eqref{MC}$ 
   \begin{equation}\label{eq:MC1}
      \min_{x\in\RR^d}F(x)=\frac{1}{m}\sum_{i=1}^m\frac{1}{K}\sum_{j=1}^K f(x,\xi^{i,j}) + \frac{\mu}{2}\|x - x_0\|^2,%+ \frac{1}{\sqrt{mn}}\|x - x^0\|^2
 \end{equation}
 where $K = N/m$, gives only the following bound for an approximate solution to problem \eqref{StocProb} (see Appendix \ref{S:SOTA} for details):
 \begin{equation}\label{eq:nonacc_VR}
\begin{aligned}
    %  \text{(No acceleration \cite{gorbunov2020local})} ~~
     \EE{\boldsymbol{F}(x^T)} - \boldsymbol{F}(x^*) \lesssim& \\ 
     \exp\left( - \min{\left\{\frac{T}{\sqrt{N}}, \frac{mnT}{N}\right\}} \right) 
     &+ \frac{1}{\sqrt{N}}.
\end{aligned}
\end{equation}

The RHS of the above inequality consists of two terms. The first one (optimization error) corresponds to the inexact solution of the approximation $\eqref{eq:MC1}$ and the second one (statistical error) comes from statistical reasoning of how well  \eqref{eq:MC1} approximates \eqref{StocProb}. 
Ideally, optimization error and statistical error should be of the same order. Indeed, if, due to a small budget of communications, the optimization error dominates, the sample size $N$ should have been chosen smaller. On the other hand, there is no much sense in optimizing below the statistical error.
% If the considered method performs not enough communication rounds, the first term will be larger than the second one and the convergence rate will mainly depend on it. On the other hand, when the number of communications is too large, the second term majorizes the first one. Since we want 
Thus, to minimize $T$, we are interested in the regime when optimization and statistical errors are of the same order. This is achieved when $T \simeq \max \{N^{1/2}, N/(mn)\}$ (recall that $\simeq$ hides also logarithmic factors). At the same time, we would like to maximize the number of workers to scale up the computations. If $m$ is too large, i.e. $m \gtrsim N^{1/2}$, the minimum in the exponent is achieved at the first term and there is no improvement in convergence rate with the increase of $m$. Therefore, the best possible choice is $m \simeq \sqrt{N}$ (where we set $n = 1$ to maximize $m$) and $T \simeq \sqrt{N}$. In the rest of the paper, we follow the same scheme to estimate a sufficient number of communication rounds and the possible numbers of workers. 
 
Optimal (accelerated) distributed   Variance Reduced method from \cite{li2020optimal} gives
\begin{equation}\label{eq:acc_VR}
\begin{aligned}
    % \text{(Acceleration)} ~~ 
    \EE{\boldsymbol{F}(x^T)} - \boldsymbol{F}(x^*) \lesssim & \\ 
     \exp\left( -\min{\left\{\frac{T}{N^{1/4}}, \frac{mnT}{N}, \frac{\sqrt{m}nT}{N^{3/4}}\right\}} \right)& 
     +  \frac{1}{\sqrt{N}}.
\end{aligned}
\end{equation}

From $\eqref{eq:acc_VR}$ following the same reasoning as before we derive that to minimize $T$ and $m$, we should choose $T \simeq N^{1/4}$ and $m \simeq N$. Interestingly, the same result is achieved by the standard accelerated gradient method \cite{nesterov2018lectures} applied to the finite-sum problem with $N$ terms in the sum.
%for the finite-sum problem (with $N$ terms) has the same result: $T \simeq N^{1/4}$ and $m \simeq N$.}
% communication requirements. }

%  $$\EE{\boldsymbol{F}(x^T)} - \boldsymbol{F}(x^*) \le \frac{1}{T} + \frac{1}{{\sqrt{mnT}}} ~~\text{(no acceleration \cite{gorbunov2020local})}.$$

%\todo{the division between statistical similarity subsubsection and SAA subsubsection is vague. Should be clarified.}

%With additional assumption about $f$ (i.e. $f(x,\xi) := f\left(\langle a(\xi), x \rangle\right)$) we can additionally adjust second-order (centralized) distributed Newton-type methods for \eqref{MC} to obtain better estimates \cite{islamov2021distributed}. Note, that our result \eqref{ANPN} does not assume $f(x,\xi) := f\left(\langle a(\xi), x \rangle\right)$.

\subsubsection{Exploiting statistical similarity} 
Recent advances in distributed optimization for solving problem \eqref{MC} are achieved by distributing $N$ realizations of $f(x,\xi)$ between $m$ workers each having $n=N/m$ realizations.
Then problem \eqref{MC} takes the form 
\begin{equation}\label{MC1}
  \min_{x\in\RR^d}F(x)=\frac{1}{m}\sum_{k=1}^m f_k(x)+ \frac{\mu}{2}\|x - x_0\|^2,
\end{equation}
where $\mu \simeq 1/\sqrt{N}=1/\sqrt{mn}$, $f_k(x) = \frac{1}{n}\sum_{j=1}^n f(x,\xi^{k,j})$. Using probabilistic arguments statistical similarity is shown between $f_k$ and the whole sum.
More formally, $\|\nabla^2 f_k(x) -  \frac{1}{m}\sum_{i=1}^m\nabla^2 f_i(x)\|\le\beta$, where $\beta \simeq 1/\sqrt{n}$. 
%In many machine learning problems, by standard statistical reasoning, each local dataset in a computational network, is a good approximation to the whole global dataset, meaning that in some sense local loss on a machine is statistically similar to the global loss summed up over all the machines. 
This idea have been recently extensively exploited for optimization problems (mainly) over master/workers architectures,  under the name of statistical preconditioning \cite{pmlr-v32-shamir14,reddi2016aide,JMLR:v21:19-764,pmlr-v119-hendrikx20a,dragomir2019optimal,sun2019convergence,hendrikx2020statistically,dvurechensky2021hyperfast}. 
These papers focus on solving the finite-sum problem \eqref{MC} and most of them do not achieve the lower communication complexity bound for this setting obtained in \cite{arjevani2015communication}:\footnote{Here and below we use $O()$, $\Omega()$ notation to denote bounds which hold up to constant factors, and $\widetilde O()$, $\widetilde\Omega()$ notation to denote bounds which hold up to constant and polylogarithmic factors.} 
\begin{equation}\label{LB}
\Omega\left(\sqrt{1+ \frac{\beta}{\mu}}\ln\left(\frac{\Delta F(x^0)}{ \varepsilon'}\right)\right)
\end{equation}
to solve problem \eqref{MC} with accuracy $\varepsilon'$, where $\Delta F(x^0):=F(x^0)-F(x_F^*)$ and $x_F^*$ is the solution to this problem.
The authors of \cite{Zhang2018} propose a distributed implementation of the damped Newton Method called DISCO in the master/workers architecture for minimizing $M$-self-concordant functions and achieve the complexity bound
\begin{equation}\label{DISCO}
O\left(\left(M^2\Delta F(x^0) + \ln \frac{1}{\varepsilon'}\right)\sqrt{1+ \frac{\beta}{\mu}}\right).
\end{equation}
%\textcolor{red}{DISCO does!}
%Currently it is an open question how to reach this lower bound by using only first-order oracles at each node in the network. The results of \cite{hendrikx2020statistically} (in centralized case) and negative results of \cite{dragomir2019optimal} lead to a conjecture that this lower bound \eqref{LB} hardly can be achieved based only on first-order methods.\textcolor{red}{my understanding is that results there are under a different oracle and assumptions}
Unlike \cite{Zhang2018}, we propose to reach \eqref{LB} by using cubic-regularized Newton's method at the central node.
%Since the size of the message between nodes remains $\mathcal{O}(d)$ as for first-order methods, our approach allows to reduce communication complexity without additional communication overhead compared to  first-order methods.
%relaxation seems to be an important also from the practical point of view. 
%In our approach we consider only ANPN method (second-order method). 
Moreover, most of the above statistical preconditioning papers focus on \eqref{MC} and do not account for the actual goal of solving the original stochastic optimization problem \eqref{StocProb}.
Under an appropriate choice of the parameter $\beta \simeq 1/\sqrt{n} = 1/\sqrt{N/m}$, when combined with statistical reasoning, the best result in the literature for \eqref{StocProb} corresponds to the guarantee 
% \ag{I guess it'd be better to rewrite the next two formulas in the same way as \eqref{SVRG} (in this case it'd be better to add another reformulation of \eqref{LB}) or in the same way as \eqref{LB}.}
% $$
% \EE{\boldsymbol{F}(x^T)} - \boldsymbol{F}(x^*) \le \frac{\beta}{T^{\kappa(\boldsymbol{F})}} + \frac{1}{\sqrt{\aa{N}}} =  \frac{1}{\aa{(N/m)}^{1/2}T^{\kappa(\boldsymbol{F})}} + \frac{1}{\sqrt{\aa{N}}},
% $$

\begin{equation}\label{eq:bound_kappa}
     \EE \boldsymbol{F}(x^T) - \boldsymbol{F}(x^*) \lesssim 
     \exp\left(-\frac{T}{m^{1/(2\kappa(\boldsymbol{F}))}}\right) + \frac{1}{\sqrt{N}},
\end{equation}
where $\kappa(\boldsymbol{F})\in [1,2]$ and, in general \cite{dragomir2019optimal}, $\kappa(\boldsymbol{F}) \simeq 1$.
Moreover, the methods achieving this bound \cite{sun2019convergence,hendrikx2020statistically,dvurechensky2021hyperfast} require to solve rather difficult auxiliary problems at each node. 

In \cite{daneshmand2021newton} the latter drawback is overcome by applying \textit{non-accelerated} cubic regularized Newton step \cite{nesterov2006cubic} at each node in order to solve \eqref{MC}, which results in the bound
% $$
% \EE{\boldsymbol{F}(x^T)} - \boldsymbol{F}(x^*) \le \frac{1}{T^2}  + \frac{\beta}{T} + \frac{1}{\sqrt{\aa{N}}} =  \frac{1}{T^2} + \frac{1}{\aa{(N/m)}^{1/2}T} + \frac{1}{\sqrt{\aa{N}}}
% $$

\begin{equation}\label{eq:Cubic_conv_notacc}
    \begin{aligned}
         \EE \boldsymbol{F}(x^T) - \boldsymbol{F}(x^*) \lesssim& \\
     \exp\left(-\min \left\{ \frac{T}{N^{1/4}}, \frac{T}{m^{1/2}} \right\}\right)&+ \frac{1}{\sqrt{N}}
    \end{aligned}
\end{equation}
for the problem \eqref{StocProb}.

 \subsection{Our contribution}
 The main contribution of this paper is two-fold. First, we focus on  finite-sum problems under statistical similarity and propose a master/workers distributed algorithm for such problems. 
 The main idea is to implement inexact accelerated cubic regularized Newton's method \cite{nesterov2008accelerating,baes2009estimate,ghadimi2017second-order} at the master node for functions with $L$-Lipschitz Hessian, which allows to obtain communication complexity bound 
 \begin{equation}\label{Our_bound}
O\left(\sqrt{\frac{\beta}{\mu}} \ln \frac{1}{\varepsilon'}  +  \left( \frac{L^2\Delta F(x_0)}{\mu^{3}} \right)^{1/6}\right)
\end{equation}
 that is better than the bound \eqref{DISCO} in \cite{Zhang2018} since $M=L/\mu^{3/2}$, and matches the dependence on $\beta$ and $\mu$ in the lower bound \eqref{LB}. Since the size of the message between nodes remains $\mathcal{O}(d)$ as for first-order methods, our approach allows to reduce communication complexity without additional communication overhead compared to  first-order methods.
 %The main idea is, unlike previous works \textcolor{red}{[what about DISCO?]}, to go beyond first-order methods and allow second-order steps on the central node in order to implement inexact accelerated cubic regularized Newton's method \cite{nesterov2008accelerating,baes2009estimate,ghadimi2017second-order}.
 %This allows us to obtain a centralized distributed algorithm with communication complexity matching the lower bound  \cite{arjevani2015communication} specialized for the centralized setup and given by \eqref{LB}.
 
% \ag{I propose to delete the next paragraph. This is not correct (I mean formula \eqref{ANPN}) and to formulate the main result \eqref{ANPN} like \eqref{SVRG}:
% \begin{gather*}
%      \boldsymbol{F}(x^T) - \boldsymbol{F}(x^*) \lesssim \\
%      \exp\left(-\frac{T}{N^{1/6}}\right) + \exp\left(-\frac{T}{(N/m)^{1/4}}\right) + \frac{1}{\sqrt{N}}.
% \end{gather*}
% }
 Second, we apply this method in order to solve the original stochastic optimization problem \eqref{StocProb} and obtain an algorithm that converges according to the following bound
 %In this paper (assuming additionally second-order smoothness of $f$) we develop distributed version of Accelerated Nesterov--Polyak--Newton (ANPN) method (Accelerated Cubic Regularized Newton method \cite{nesterov2008accelerating}) that converges according to the following estimate
%  \begin{equation}\label{ANPN}
%       \EE{\boldsymbol{F}(x^T)} - \boldsymbol{F}(x^*) \le   \frac{1}{T^3} + \frac{1}{\aa{(N/m)}^{1/2}T^2} + \frac{1}{\aa{\sqrt{N}}}.
%  \end{equation}
\begin{equation}\label{eq:Cubic_conv}
    \begin{aligned}
         \EE \boldsymbol{F}(x^T) - \boldsymbol{F}(x^*) \lesssim ~& \\
         \exp\left(-\min \left\{\frac{T}{N^{1/6}}, \frac{T}{m^{1/4}}\right\}\right)& + \frac{1}{\sqrt{N}}.
    \end{aligned}
\end{equation}

%  This bound is better than \eqref{Wood2021}, \eqref{SVRG}, in particular, when $\pd{N/m}=n \leq c_1 T^2 \leq c_2 m^{1/2}$ for some small numerical constants $c_1,c_2$
 %Note, that only in self-concordant cases we have that corresponding lower bounds are strictly better than our upper bound \eqref{ANPN}.

%  Note that for brevity  we consider only non-strongly convex case (therefore we used regularization in \eqref{MC} and \eqref{sum}).  
%  \todo{write about strongly convex case}
 
Under additional assumption of $\mu$-strong convexity of the original problem $\eqref{StocProb}$, the proposed algorithm provides the bound
\begin{gather*}
     \EE \boldsymbol{F}(x^T) - \boldsymbol{F}(x^*) \lesssim \\
     \exp\left(- \min\left\{T\mu ^{1/3}, T\frac{\mu^{1/2}N^{1/4}}{m^{1/4}}\right\}\right) + \frac{1}{\mu N}.
\end{gather*}

In Table \ref{tab:comp} we summarize a comparison with the related works described above. Note, that in our work we are motivated by two goals: minimize the number of communications $T$ and maximize the number of workers $m$ to achieve better parallelization possibilities. 
\begin{table}[ht]
\caption{Comparison between different methods and bounds for problem \eqref{StocProb}}\label{tab:comp}
    \begin{center}
        \begin{tabular}{|c|c|c|c|}
            \hline 
             & Bound & $T$ & $m$\tabularnewline
            \hline 
            SA lower bound \cite{woodworth2021minmax}& $\eqref{Wood2021}$ & $N^{1/4}$ & $N^{3/4}$\tabularnewline
            \hline 
            SGD \cite{godichon2020rates}&$\eqref{eq:godichon2020rates}$ & $1$ & $N^{1/2}$\tabularnewline
            \hline 
            SVRG \cite{woodworth2018graph}&$\eqref{SVRG}$  & $1$ & $N^{1/2}$\tabularnewline
            \hline 
            Non-accelerated VR \cite{gorbunov2020local}& $\eqref{eq:nonacc_VR}$  & $N^{1/2}$ & $N^{1/2}$ \tabularnewline
            \hline 
            Accelerated VR \cite{li2020optimal}& $\eqref{eq:acc_VR}$  & $N^{1/4}$ & $N$ \tabularnewline
            \hline 
            Cubic Newton \cite{daneshmand2021newton} & $\eqref{eq:Cubic_conv_notacc}$   & $N^{1/4}$ & $N^{1/2}$ \tabularnewline
            \hline 
           \begin{tabular}{@{}c@{}}Accelerated Cubic Newton \\  \text{[this paper]} \end{tabular} & $\eqref{eq:Cubic_conv}$ & $N^{1/6}$ & $N^{2/3}$ \tabularnewline
            \hline 
        \end{tabular}
    \end{center}
\end{table}

\section{Finite-Sum Approximation for Stochastic Optimization Problem }\label{sec:2}

%\todo{State the main assumptions about the stochastic optimization problem \eqref{StocProb}, write its regularized finite-sum counterpart which we will solve. Write explicitly the results on how these two problems are connected, how to choose the regularization parameter, accuracy of the solution to the finite-sum optimization problem (and the criterion in which this accuracy is defined) which guarantees the desired accuracy of the solution to the original stochastic optimization problem. State the main properties of the finite-sum problem such as statistical similarity. What is not clear so far is how the regularizer enters the similarity: statistical similarity is between the small sum and large sum, without regularizer, as it seems from the SPAG paper. Also everything should be done for the another case of strongly convex function in \eqref{StocProb}.}
%\todo{Add the case of strongly convex function in \eqref{StocProb}.}

%\todo{Write the plan of the section so that the reader knows what to expect.}
In this section, we state the main assumptions about stochastic optimization problem \eqref{StocProb}, discuss its finite-sum approximation obtained via the SAA approach, as well as introduce and motivate the concept of statistical similarity for finite-sum optimization problems.

We consider stochastic minimization problem
\begin{equation}\label{eq:stoch_problem}
    \min_{x\in\RR^d}\boldsymbol{F}(x):=\EE_{\xi}{f(x,\xi)},
\end{equation}
where $\xi$ is a random variable, $f(x, \xi)$ is convex function w.r.t. $x \in \RR^d$ for all $\xi$. We assume that, for all $\xi$, $f(x, \xi)$ is $L_0$-Lipschitz continuous, i.e.,  
%\ag{This is not correct definition of Lipschitz continuity. It's Lipschitz continuity of gradient...} \pd{Corrected}
\begin{equation*}
    \| f(x, \xi) - \  f(y, \xi)\| \leq L_0 \|x - y\|~ \forall x, y \in \RR^d
\end{equation*}
and has $L$-Lipschitz Hessian, i.e.
\begin{equation*}
    \|\nabla^2 f(x, \xi) - \nabla^2 f(y, \xi)\| \leq L \|x - y\|~ \forall x, y \in \RR^d.
\end{equation*}
% \todo{Please unify notation for the norm. It is $\|\|$ in the intro and $\|\|$ here and below. Also it should be explicitly written that %the normis Euclidean.}
%\todo{I think we also need to explicitly write what we mean by $O$ and $\tilde{O}$.}
Note that the first of these assumptions implies that $\boldsymbol{F}$ is $L_0$-Lipschitz continuous. 
We will also consider the case when $f(x,\xi)$ is $\mu$-strongly convex for all $\xi$ as functions of $x$.

% \textcolor{orange}{I would write formally all the assumptions on the stochastic and the SAA problem as we introduce the problem}
% \pd{Pavel: I agree about the assumptions on the stochastic problem. In particular, later we use that $f(x, \xi)$ is $L_0$-Lipschitz. As far as I understand, the assumptions on the SAA problem follow from the assumptions in the stochastic problem.}

\subsection{Finite-sum approximation}
\label{S:finite_sum_approx}
To solve  problem $\eqref{eq:stoch_problem}$ we apply the SAA approach, i.e. sample $N$ iid realizations $\xi^{k,j}$ from an unknown distribution of $\xi$, for each worker node $k=1,...,m$ define local objective $f_k(x) = \frac{1}{n}\sum_{j=1}^n f(x,\xi^{k,j})$, where $n=N/m$, and approximate the original stochastic optimization problem \eqref{eq:stoch_problem} by the finite-sum problem
 \begin{equation}\label{MC1}
      \min_{x\in\RR^d}F(x):=\frac{1}{m}\sum_{k=1}^m f_k(x)
      %\frac{1}{n}\sum_{j=1}^n f(x,\xi^{k,j}) 
      + \frac{\mu}{2}\|x - x_0\|^2.
 \end{equation}
 %where $mn \simeq \frac{1}{\e^2}$.
 %The regularization with parameter \aa{$\mu = O \left(\frac{\e}{R^2}\right)$} in $\eqref{MC1}$, where $\|x^* - x^0\| \leq R$ and $ \boldsymbol{F}(x_T) - \boldsymbol{F}(x^*) \leq \e,$ is crucial to guarantee that the
%minimizer of $\eqref{MC1}$ is a good approximation to the stochastic optimization problem $\eqref{eq:stoch_problem}$ \cite{shalev2009stochastic}.
%But a regularized empirical minimization problem $\eqref{MC1}$ has to be solved with higher accuracy. Indeed, 
We use $ x^*,  x_F^*$ to denote the solutions of problems $\eqref{eq:stoch_problem}$ and  $\eqref{MC1}$ respectively.  We assume that $x^*$ and $x_F^*$ lie inside the Euclidean ball with center at $x_0$ and radius $R$.

% Theorem 6 from \cite{shalev2009stochastic} claims that  under the assumption of  $L_0$-Lipschitz continuity of $f(x, \xi)$ w.r.t $x$ and with $\mu = \sqrt {\frac{L_0^2}{\delta N R^2}} = \sqrt {\frac{L_0^2}{\delta mn R^2}}$  the following bound holds  with probability at least $1 - \delta$:
Corollary 1.2 from \cite{feldman2019high} claims that  under the assumption of  $L_0$-Lipschitz continuity of $f(x, \xi)$ w.r.t $x$ and with $\mu =  {\frac{L_0\log N}{RN}}$
% = {\frac{L_0\log (mn)}{Rmn}}$ 
the following bound holds  with probability at least $1 - \delta$:
%\ag{I'm not sure about $L_0$. Above we determine $L_0$ as Lipschits-gradient constant. May be we should redefine $L_0$ as Lipschits constant? What is $\delta$ (here and below)? Maybe it's worth to include with probability greater than $1 - \delta$?}
 $$
 \boldsymbol{F}(x_F^*) - \boldsymbol{F}(x^*) \leq O\left({\frac{L_0 R}{\sqrt{N}}}\log\left(N/\delta\right)\right).
 $$
 %\todo{How is $R$ defined here?}
 Using $L_0$-Lipschitz continuity of $\boldsymbol{F}(x)$, we obtain that
 $$
 \boldsymbol{F}(x) - \boldsymbol{F}(x_F^*) \leq L_0 \|x - x_F^*\|, \; \forall x.
 $$
 Combining the above two inequalities and plugging $x = x^T$, where $x^T$ is an output of some optimization algorithm after $T$ communication rounds, we obtain
 \begin{equation}
 \label{eq:bound_fin_sum_to_bound_stoch}
 \boldsymbol{F}(x^T) - \boldsymbol{F}(x^*) \leq   L_0 \|x^T - x_F^*\| +
 % O\left(\sqrt{\frac{L_0^2 R^2}{\delta N}}\right)
 O\left({\frac{L_0 R}{\sqrt{N}}}\log\left(N/\delta\right)\right).
 \end{equation}
 Thus, if we find a good approximation $x^T$ to the solution $x_F^*$ of the finite-sum problem \eqref{MC1}, we automatically obtain an approximate solution to problem  \eqref{eq:stoch_problem}.
 
If we additionally assume that $f(x, \xi)$ is $\mu$-strongly convex for all $\xi$, there is no need in additional regularization and the original stochastic problem $\eqref{eq:stoch_problem}$ can be approximated by 
 $$
 \min \limits_x \frac{1}{m}\sum_{k=1}^m f_k(x).
 $$
 In that case the bound $\eqref{eq:bound_fin_sum_to_bound_stoch}$ can be improved \cite{shalev2009stochastic} to:
 \begin{equation}\label{eq:bound_strong}
       \EE \boldsymbol{F}(x^T) - \boldsymbol{F}(x^*) \leq   L_0 \|x^T - x_F^*\| + O\left({\frac{L_0^2}{ \mu N}}\right).
 \end{equation}

%  Using $L_0$-Lipschitz continuity of $\boldsymbol{F}(x)$ and the strong convexity of $F$ for any $x \in \mathbb{R}^d$ we get
%  $$
%  \boldsymbol{F}(x) - \boldsymbol{F}(\aa{x_F^*}) \leq \sqrt{\frac{2L_0^2}{\mu}}\sqrt{F(x) - F(\aa{x_F^*})}.
%  $$
%  Combining two inequalities from above and plugging $x:= \aa{x^T}$, where $\aa{x^T}$ is an output of some optimization algorithm we obtain
%  $$
%  \boldsymbol{F}(\aa{x^T}) - \boldsymbol{F}(\aa{x^*}) \leq O\left(\sqrt{\frac{L_0^2 R^2}{\delta mn}}\right) + \sqrt{\frac{2L_0^2}{\mu}}\sqrt{F(\aa{x^T}) - F(\aa{x_F^*})}.
%  $$

% Which means that if we want to solve an original stochastic problem $\eqref{eq:stoch_problem}$ with accuracy $\e$ \aa{we need to choose $mn = O\left( \frac{L_0^2R^2}{\e^2} \right)$ and solve the finite-sum problem $\eqref{MC1}$ with accuracy $\e' = \frac{\e^2\mu}{2L_0^2} = O\left(\frac{\e^3}{L_0^2 R^2}\right)$.}
% \todo{It is better to define in which sense we mean accuracy of the solution.} \textcolor{orange}{I agree!}

% \todo{We need to think about different notation for $Ef(x,\xi)$ in \eqref{StocProb} and its finite-sum approximation in \eqref{MC1}. I used $\boldsymbol{F}$ in \eqref{StocProb} and $F$ in \eqref{MC1}. But, maybe there is a better option. Also it is not clear whether we should define $F$ to be regularized finite-sum or just the finite sum.}
%We assume that $$\|\nabla^2 f_k(x) - \nabla^2 f(x)\|\le\beta.$$ 
% For \eqref{MC} we have $f_k(x) = \frac{1}{n}\sum_{j=1}^n f(x,\xi^{k,j})$, $\mu \simeq 1/\sqrt{mn}$ and $\beta \simeq 1/\sqrt{n}$ in \eqref{sum}.

\subsection{Statistical similarity}
\label{S:stat_similarity}
Since problem \eqref{MC1} originates from problem \eqref{eq:stoch_problem}, we can state and utilize one more important property of the objective function in \eqref{MC1}, namely, statistical similarity.
Under assumption that the observations $\xi^{k, j}$ are iid, the following bound holds \cite{zhang2018distributed} for all $k=1,...,m$ with  probability  at least $1 - \delta$: 
%\todo{a bit strange that $\delta$ does not enter the bound. How $r$ is defined? Also I'm confused because $F$ contains the regularizer but $f_k$ not.}\aa{ $\delta$ is in the logarithmic term}
\begin{equation}
\label{eq:stat_sim}
    \sup _{w}\left\|\frac{1}{m}\sum_{j=1}^m \nabla^2 f_j(w)-\nabla^2f_k(w)\right\| \leq \tilde{O}\left({\sqrt{\frac{32 L^{2} d}{n}}}\right).
    %\\  \min \left\{L, \sqrt{\frac{32 L^{2} d}{n}} \cdot \sqrt{\log \left(1+\frac{r M \sqrt{2 n}}{L}\right)+\frac{\log (m d / \delta)}{d}}\right\}.
\end{equation}
In the next section we utilize statistical similarity to propose an efficient distributed algorithm for problem \eqref{MC1}.

\section{Achieving the lower bound for finite-sum optimization under statistical similarity}
Motivated by the connection between the finite sum problem \eqref{MC1} and the original stochastic optimization problem \eqref{eq:stoch_problem} stated in the previous section, we propose in this section a distributed minimization algorithm with master/workers architecture for general finite-sum problems, in particular, 
problem \eqref{MC1}. We also show that this algorithm achieves the lower communication complexity bound in  \cite{arjevani2015communication} specialized for our setting of master/workers architecture. Moreover, our algorithm achieves communication complexity bound that is better than the one in \cite{Zhang2018}.
%for a more challenging setting of decentralized distributed optimization.

To that end we consider a network with $m$ agents and the following general finite-sum optimization problem
%Let us consider smooth convex optimization problem ($m$ -- the number of nodes)
%\todo{Actually, I've realized that in this section we can consider a more general problem, maybe without regularization, since afterwards we will apply this general algorithm also for the setting when \eqref{StocProb} is strongly convex. then regularization is not needed.}
\begin{equation}\label{sum1}
     \min_{x\in\RR^d}F(x):=\frac{1}{m}\sum_{k=1}^m f_k(x),% + \frac{\mu}{2}\|x - x^0\|^2,
\end{equation}
where each worker node $k$ has access only to its local part $f_k$ of the objective.
Note that this problem statement covers problem \eqref{MC1} as a special case since the regularizer can be equally distributed among the agents. We assume that $F$ is $\mu$-strongly convex and has $L$-Lispschitz Hessian.
% \textcolor{orange}{I think that there are repetitions with the previous section. I suggest to keep all the problem formulation, network description in the previous section, here directly the algorithm}
% \pd{Pavel: My idea was the following. From the previous section we see that we can solve the stochastic optimization if we solve finite-sum problem. In this section we focus on a general finite-sum problem which may be not related with the original stochastic problem. So, the idea was to present in this section a general distributed algorithm for finite-sum problems.}\textcolor{orange}{ok. }
Motivated by subsection \ref{S:stat_similarity}, we make in this section the following assumption that each local objective $f_k$ is a good approximation to the global objective $F$.
{\begin{assumption}\label{as:similarity}(statistical similarity)
    Each local function $f_k$ is $\beta$ related to the global objective $F$:
    \begin{equation}\label{eq:beta-sim}
    \|\nabla^2 F(x) - \nabla^2 f_k(x)\| \leq \beta,
    \end{equation}
\end{assumption}
for all $x \in \mathbb R^d$ and some $\beta > 0$.}
In particular, for problem \eqref{MC1} we have that $\beta = \tilde{O}(\sqrt{d/n})$ with high probability.
%{We also assume that the local empirical losses are statistically similar to the total loss. In other words, $f_k(x)$ is a good approximation to $\hat{F} = \sum \limits_{k = 1}^m f_k(x)$. Therefore, we have the following assumption}
%Which implies that $\beta$ from Assumption $\ref{as:similarity}$ is equal to the RHS of the inequality above and $\|\nabla^2 f(w) - \nabla^2 \phi(w)\| \sim \sqrt{\frac{d}{n}}$ with high probability.

%\todo{Specify the algorithm for our setting as much as it is possible: in particular the value of $\tau$ can be substituted to the algorithm. Explain what happens in the algorithm also in words. State the main theorem for our setting of statistical similarity. Give the necessary details to use the method of Ghadimi et al in the proof of the main theorem for our setting.}

To solve problem \eqref{sum1} we propose Restarted Distributed Accelerated Cubic Regularized Newton's Method by extending the methods of \cite{nesterov2008accelerating,baes2009estimate,ghadimi2017second-order}. 
First, we describe Distributed Accelerated  Cubic Regularized Newton's Method for minimizing convex functions (Algorithm \ref{alg:ANPN}). Then we apply restart technique to obtain linearly convergent Algorithm \ref{alg:restarts} for minimizing $\mu$-strongly convex function in \eqref{sum1}.

We choose one of the agents (w.l.o.g. the agent with number 1) to be the central node (server), and all the others are $m-1$ to be workers (machines) that are assumed to be connected with the central node. 
Each one of these nodes stores the part $f_k$ of the global objective, computes gradients, and passes them to the central node.  Then, the server forms the gradient of the global objective $\nabla F(x)$, computes the Hessian of its local loss $f_1(x)$, constructs the following model of the global objective $F$:
\begin{align}
&\widetilde{{F}}_M(x, z) =F\left(z\right)+\left\langle\nabla F\left(z\right), x-z\right\rangle \\
&+\frac{1}{2}\left\langle (\nabla^2 {f_1}(z) +  {3\beta}I)\left(x-z\right), x-z\right\rangle+\frac{M}{6}\left\|x-z\right\|^{3},
\end{align}
updates variable $x$ by minimizing this model and broadcasts it to the workers.

Algorithm \ref{alg:ANPN} is a master/workers generalization of the Accelerated Cubic Newton method under inexact second-order information \cite{ghadimi2017second-order}. We use local objective $f_1(x)$  on the server as an approximation to the global objective $F$. Note, that we need to compute $\nabla^2 f_1(x)$ only once per iteration, at the point $w_t$, but at the same time we need two communication rounds per one iteration. Importantly, each communication round requires sending only vectors. Also, our approach allows to vary sample size on the central node. This can lead to the better performance, since it improves the constant $\beta$ obtained from statistical similarity.

\begin{algorithm}[t]
    \caption{Accelerated Cubic Newton}\label{alg:ANPN}
	\textbf{Input}: $x_{0} \in \mathbb{R}^d$, $\alpha_0 = 1$, $A_0=1$, $L$, $\beta$.
	%, $\{\alpha_t\}_{t \geq 1} \in (0, 1)$, 
	%\aa{nonnegative nondecreasing sequence $\{\hat{\lambda}_t\}_{t \geq 0}$}, 
	%and strong convexity parameter $\mu \geq 0$. 

	\textbf{Step 0:} \begin{subequations}
		
% 	    \textit{At every worker node } 
		
% 		Compute $n$ gradients $\nabla f(x_0, \xi^{k, j})$, $j \in [1, n]$ and send them to the server.
			
% 		\textit{At central node ($k = 1$)} 
		
		{Master node computes $\nabla F(x_0)$ by collecting $\nabla f_k(x_0)$ from workers.}
		\begin{gather*}
			x_1 = \argmin_{x\in \mathbb{R}^d} \widetilde{{F}}_{4L}(x, x_0),\\
			y_1 = \argmin_{x \in \mathbb{R}^d}\{{\psi_1}(x) := F(x_1) + 8\beta\|x - x_0\|^2 + 16L\|x - x_0\|^3\}.
		\end{gather*}
		Set $w_0 = x_0$ and $ t = 1$.
		
	\textbf{Step 1:} 
	
% 		\textit{At every worker node } 
		
% 		Compute $n$ gradients $\nabla f(w_t, \xi^{k, j})$, $j \in [1, n]$ and send them to the server.
		
% 		\textit{At central node } 
        Master node computes $\nabla F(w_t)$ by collecting $\nabla f_k(w_t)$ from workers.

		Set 
		$$
		\begin{aligned}
			w_{t} &=\left(1-\frac{3}{t + 3}\right) x_{t}+\frac{3}{t + 3} y_{t} \\
			x_{t+1} &=\arg \min _{x \in \mathbb{R}^{n}} \widetilde{F}_{4L}\left(x , w_{t}\right).
		\end{aligned}
		$$
	\textbf{Step 2:} 
	
% 		\textit{At every worker node} 
		
% 		Compute $n$ gradients $\nabla f(x_{t+1}, \xi^{k, j})$, $j \in [1, n]$ and send them to the server.
		
% 		\textit{At central node } 
        Master node computes $\nabla F(x_{t+1})$ by collecting $\nabla f_k(x_{t+1})$ from workers.
        %{Compute $\nabla F(x_{t+1})$ (requires communication)}
		
		Define $A_{t}=A_{t-1}(1-3/(t + 3))$,
		\begin{align}
		    &\psi_{t+1}(x):={\psi_{t}(x)}+4\beta\|x-x_0\|^2 \\
		    &+\frac{3}{A_t(t + 3)} \left(F\left(x_{t+1}\right)+\left\langle\nabla F\left(x_{t+1}\right), x-x_{t+1}\right\rangle \right).
		\end{align}

		%Compute  $y_{t+1}$ as
		$$
		y_{t+1}=\arg \min _{x \in \mathbb{R}^{n}}\left\{\psi_{t+1}(x)\right\}.
		$$
		%where
		
% 		$$
%             A_{t}:=\left\{\begin{array}{ll}
%             1, & t=0, \\
%             \prod_{i=1}^{t}\left(1-\alpha_{t}\right), & t \geq 1
%             \end{array}\right.,
%         $$
% 		and
% 		$$
%     		\mathrm{lb}\left(x ; x_{t+1}\right)=F\left(x_{t+1}\right)+\left\langle\nabla f\left(x_{t+1}\right), x-x_{t+1}\right\rangle
% 		$$
		\textbf{Step 3:} Set $t \leftarrow t+1$ and go to step 1 .

	\end{subequations}
\end{algorithm}

The next Theorem gives convergence rate of Algorithm $\ref{alg:ANPN}$.
\begin{theorem}
\label{Th:ACRNM_conv_convex}
    Let Assumption $\ref{as:similarity}$ hold and $F$  be convex function with $L$-Lipschitz Hessian and defined in  \eqref{sum1}. Then after $t$ 
    iterations of Algorithm $\ref{alg:ANPN}$
    % \todo{I'm afraid that there will be a confusion with $T$ rounds of communications. but maybe it is fine since this section is independent.}\aa{Maybe we can replace "iteration" with "communication rounds" and multiply  the RHS of convergance rate by 2} iterations of Algorithm $\ref{alg:ANPN}$ with the following choice of parameters
    % \begin{equation}\label{eq:params}
    %     \alpha_t = \frac{3}{t + 3}, ~ \bar{\lambda}_t = 8\beta(t+2) ~\forall t > 0, ~C = 96L, \text{ and } M = 4L
    % \end{equation}
    we have
    \begin{gather}\label{eq:conv_rate_convex}
        F(x_t) - F(x_F^*) \leq \frac{98L\|x_0 - x_F^*\|^3}{t^3} + \frac{48\beta\|x_0-x_F^*\|^2}{t^2}
    \end{gather}
    where $x_F^*$ is a solution to \eqref{sum1}.
    %  is an upper bound for the RHS of 
    % $$
    %     \left\|y_{t}-\bar{x}_{t}\right\| \leq \frac{\alpha_{t} A_{t}^{-1}\left\|\nabla f\left(x_{t+1}\right)\right\|}{\aa{\frac \mu 4}+\bar{\lambda}_{t}}.
    % $$
\end{theorem}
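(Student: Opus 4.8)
The plan is to prove \eqref{eq:conv_rate_convex} by the estimating-functions (estimate-sequence) technique for accelerated cubic regularization, adapted so that the Hessian inexactness incurred by using $\nabla^2 f_1$ in place of $\nabla^2 F$ is absorbed by the curvature shift $3\beta I$ and by the quadratic penalties injected into the $\psi_t$. \textbf{Upper model.} First I would show that $\widetilde{F}_{4L}(\cdot,z)$ globally overestimates $F$. Since $F$ is convex with $L$-Lipschitz Hessian, $F(x)\le F(z)+\langle\nabla F(z),x-z\rangle+\tfrac12\langle\nabla^2F(z)(x-z),x-z\rangle+\tfrac{L}{6}\|x-z\|^3$ for all $x,z$. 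Assumption~\ref{as:similarity} applied to $f_1$ gives $\nabla^2F(z)\preceq\nabla^2f_1(z)+\beta I\preceq\nabla^2f_1(z)+3\beta I$, and since $4L\ge L$ we obtain $F(x)\le\widetilde{F}_{4L}(x,z)$ for every $x,z$. In particular $x_{t+1}=\argmin_x\widetilde{F}_{4L}(x,w_t)$ satisfies the descent estimate $F(x_{t+1})\le\min_x\widetilde{F}_{4L}(x,w_t)$, and the standard cubic-step lemma then bounds the one-step progress $F(w_t)-F(x_{t+1})$ in terms of $\|x_{t+1}-w_t\|$ and $\|\nabla F(x_{t+1})\|$.

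\textbf{Estimating functions.} Next I would read off the structure of $\psi_t$: it is the sum of the weight-one constant $F(x_1)$, the linear minorants $F(x_s)+\langle\nabla F(x_s),x-x_s\rangle\le F(x)$ carried in with weights $\tfrac{3}{A_{s-1}(s+2)}=\tfrac{s(s+1)}{2}$, and the nonnegative penalty $(4t+4)\beta\|x-x_0\|^2+16L\|x-x_0\|^3$. Using the recursion $A_t=A_{t-1}\tfrac{t}{t+3}$, so that $A_{t-1}=\tfrac{6}{t(t+1)(t+2)}$, one checks that the linear weights together with the initial unit weight telescope to exactly $1/A_{t-1}$; hence $\psi_t(x)\le\tfrac{1}{A_{t-1}}F(x)+(4t+4)\beta\|x-x_0\|^2+16L\|x-x_0\|^3$ for all $x$. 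The central claim, proved by induction on $t$, is the lower invariant $F(x_t)\le A_{t-1}\,\psi_t^\ast$ with $\psi_t^\ast:=\min_x\psi_t(x)$; it holds with equality at $t=1$ because $\psi_1^\ast=F(x_1)$ and $A_0=1$.

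\textbf{Inductive step and conclusion.} Because $\psi_t$ contains $(4t+4)\beta\|x-x_0\|^2$ it is $(8t+8)\beta$-strongly convex, so $\psi_t(x)\ge\psi_t^\ast+\tfrac{\sigma_t}{2}\|x-y_t\|^2$ around $y_t=\argmin_x\psi_t$. Minimizing $\psi_{t+1}=\psi_t+4\beta\|x-x_0\|^2+\lambda_{t+1}\big(F(x_{t+1})+\langle\nabla F(x_{t+1}),x-x_{t+1}\rangle\big)$, with $\lambda_{t+1}=\tfrac{3}{A_t(t+3)}$, against this quadratic lower bound, and using the momentum point $w_t=(1-\tfrac{3}{t+3})x_t+\tfrac{3}{t+3}y_t$ together with convexity to replace $F(w_t)$ by the convex combination of $F(x_t)$ (controlled by the inductive hypothesis) and the linear minorant at $y_t$, I would show $F(x_{t+1})\le A_t\psi_{t+1}^\ast$. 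Finally, evaluating the upper estimate at $x=x_F^\ast$ gives $A_{t-1}\psi_t^\ast\le F(x_F^\ast)+A_{t-1}\big[(4t+4)\beta\|x_0-x_F^\ast\|^2+16L\|x_0-x_F^\ast\|^3\big]$, and substituting $A_{t-1}=\tfrac{6}{t(t+1)(t+2)}$ turns the two penalties into the claimed $O(\beta/t^2)$ and $O(L/t^3)$ terms.

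\textbf{Main obstacle.} The hard part will be closing the induction: verifying that the progress $F(w_t)-F(x_{t+1})$ extracted from the cubic model is large enough to cover the required increase of $A_t\psi_{t+1}^\ast$ over $A_{t-1}\psi_t^\ast$. This forces an exact balance among $\tau_t=\tfrac{3}{t+3}$, the weights $\lambda_{t+1}$, and the coefficients $4L,\,8\beta,\,16L,\,4\beta$ hard-coded into the method, and it is precisely the residual of this step, which is itself proportional to $\beta$, that together with the accumulated $4\beta\|x-x_0\|^2$ penalties pins the $\beta$-term to the $1/t^2$ scale rather than $1/t^3$ and fixes the numerical constants $48$ and $98$. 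A secondary subtlety is that the surrogate $\nabla^2f_1$ never equals $\nabla^2F$: the shift $3\beta I$ is what keeps $\widetilde{F}_{4L}$ an upper model, while the repeated quadratic penalties keep each $\psi_t$ strongly convex enough for the estimate-sequence recursion to contract.
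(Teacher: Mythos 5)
Your plan reconstructs the estimate-sequence proof of the accelerated inexact cubic Newton method from scratch, whereas the paper takes a much shorter route: it verifies that the master node implements Algorithm 2 of \cite{ghadimi2017second-order} with the inexact Hessian $H_t=\nabla^2 f_1(w)+3\beta I$ satisfying $2\beta I\preccurlyeq H_t-\nabla^2F(w)\preccurlyeq 4\beta I$ (a direct consequence of \eqref{eq:beta-sim}), matches the parameter choices, and invokes their Theorem 11 verbatim. Your bookkeeping of the outer scaffolding is correct: $A_{t-1}=\tfrac{6}{t(t+1)(t+2)}$, the linear weights $\tfrac{s(s+1)}{2}$ telescope to $1/A_{t-1}$, the quadratic penalty accumulates to $(4t+4)\beta\|x-x_0\|^2$, and the upper-model inequality $F(x)\le\widetilde F_{4L}(x,z)$ follows from $\nabla^2F(z)\preccurlyeq\nabla^2f_1(z)+3\beta I$ together with the Lipschitz-Hessian bound.

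However, as written the proposal has a genuine gap: the inductive step establishing $F(x_{t+1})\le A_t\psi_{t+1}^*$ is only announced, not carried out, and you yourself flag it as the hard part. That step is the entire substance of the proof. One must show that the decrease $F(w_t)-F(x_{t+1})$ extracted from the cubic subproblem (controlled by $\|x_{t+1}-w_t\|^3$ and, through the inexact Hessian, by a $\beta\|x_{t+1}-w_t\|^2$ leakage term) dominates the growth of $A_t\psi_{t+1}^*$ over $A_{t-1}\psi_t^*$ once the linear minorant at $x_{t+1}$ is played against the strong convexity of $\psi_t$ at $y_t$; the coefficients $4L$, $8\beta$, $16L$, $4\beta$ are tuned precisely so that this balance closes, and nothing in your sketch verifies it. A symptom that the sketch is not yet a proof: your own concluding computation, $A_{t-1}\bigl[(4t+4)\beta R^2+16LR^3\bigr]\le \tfrac{24\beta R^2}{t^2}+\tfrac{96LR^3}{t^3}$ with $R=\|x_0-x_F^*\|$, does not reproduce the constants $48$ and $98$ in \eqref{eq:conv_rate_convex}; the extra slack comes from error terms inside the induction that your invariant, as stated, does not carry. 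To complete the argument you would either have to work through that step in full (essentially re-deriving Theorem 11 of \cite{ghadimi2017second-order}) or, as the paper does, reduce to it by checking the inexactness condition and the parameter correspondence.
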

Note that $T$ iterations correspond to $2T$ communication rounds since each iteration requires two communication rounds.

\begin{proof}
%\todo{make the proof more neat. Write the correspondence between our parameters and the parameters of Ghadimi paper. First, the model on the central node corresponds to inexact model of Ghadimi. Then our parameters correspond to such values of their parameters. Reference to their theorem. }
To prove the theorem we would like to apply Theorem 11 of \cite{ghadimi2017second-order}, which analyzes an accelerated cubic-regularized Newton's method with inexact Hessian. Thus, first we show that in our algorithm the central node indeed runs accelerated cubic-regularized Newton's method with inexact Hessian (Algorithm 2 of \cite{ghadimi2017second-order}). Their algorithm uses an approximation $H_t$ for the Hessian that satisfies (in their notation $\mu_u$ instead of $\lambda$)
\begin{equation}\label{eq:inexact_ghadimi}
    \frac{\lambda}{2} I \preccurlyeq H_t - \nabla^2 F(w) \preccurlyeq \lambda I.
\end{equation}
Our assumption of $\beta$-similarity \eqref{eq:beta-sim} allows to choose $H_t = \nabla^2 f_1(w) + 3\beta I$ to satisfy $\eqref{eq:inexact_ghadimi}$ with $\lambda = 4\beta$. Indeed, the equation $\eqref{eq:beta-sim}$ implies
\begin{gather*}
    \beta \geq \frac{\|(\nabla^2 F(x) - \nabla^2 f_1(x))x\|\|x\|}{\|x\|^2} \geq \\
    \frac{\langle x, (\nabla^2 f_1(x) - \nabla^2 F(x))x\rangle}{\|x\|^2}. 
\end{gather*}
Therefore, 
\begin{gather*}
    - \beta I  \preccurlyeq \nabla^2 f_1(x) - \nabla^2 F(x) \preccurlyeq \beta I. 
\end{gather*}
Adding $3\beta I$ to the inequality above we obtain $\eqref{eq:inexact_ghadimi}$ with $\lambda = 4\beta$. 
Thus, we see that the central node using its local Hessian is equivalent to using inexact Hessian of the global objective $F$.
Since the central node calculates the gradient of the global objective $F$ by communicating with the nodes, the central node indeed implements cubic steps with inexact Hessian.
  
Our choice of the algorithm parameters corresponds to the following choice of the parameters in Algorithm 2 of \cite{ghadimi2017second-order} stated in their Theorem 11: 
\begin{align*}\label{eq:params}
    &\alpha_t = \frac{3}{t + 3}, \mu_u=4\beta, ~ \bar{\mu}_t = 8\beta(t+2), \gamma=L, \beta = 96L, \\
    &\eta = M = 4L.
\end{align*}
%Note that they use $\gamma$ to denote the Lipshitz constant of the Hessian, 
Applying  Theorem 11 of \cite{ghadimi2017second-order} we obtain the statement of our theorem.
\end{proof}

\begin{algorithm}[t]
    \caption{Restarted Accelerated Cubic Newton}\label{alg:restarts}
	\textbf{Input}: $z_0  \in \mathbb{R}^n$, strong convexity parameter $\mu > 0$, Lipschitz constant for Hessian $L$, and $R_0 > 0$ such that $\|z_0 - x_F^*\|\leq R_0$. 
	\textbf{For $s = 1, 2, \ldots$:} 
	\begin{enumerate}
	    \item Set $x_0 = z_{s - 1}$ and $R_s= \frac{R_0}{2^s}$.
	    \item Run Algorithm \ref{alg:ANPN} 
	    %with parameters defined in $\eqref{eq:params}$ 
	    for $T_s$ iterations, where
	    \begin{equation}\label{eq:restarts_it}
	        t_s = 2\max\left\{ \left(\frac{196LR_{s-1}}{\mu}\right)^\frac{1}{3}, 2\left(\frac{24\beta}{\mu} \right)^\frac{1}{2}\right\}.
	    \end{equation}
	    \item Set $z_s = x_{t_s}$.
	\end{enumerate}
\end{algorithm}
Our next step is to restart Algorithm \ref{alg:ANPN} in order to exploit  strong  convexity of the objective and obtain linear convergence rate. 
In each step of Algorithm $\ref{alg:restarts}$ we run Distributed Accelerated Cubic Regularized Method for the number of iterations, defined in $\eqref{eq:restarts_it}$. Then,  we use its output as the initial point for the next run of Algorithm $\ref{alg:ANPN}$ with reset parameters and so on. 
%The convergence rate is obtained in $\cite{ghadimi2017second-order}$:
\begin{theorem}
\label{Th:ACRNM_conv_str_convex}
Let the assumptions of Theorem \ref{Th:ACRNM_conv_convex} hold and additionally $F$ be $\mu$-strongly convex. Let $R_0 > 0$ be such that $\|z_0 - x^*\|\leq R_0$ and $\{z_s\}_{s \geq 0}$ be generated by Algorithm $\ref{alg:restarts}$. Then for any $s \geq 0$ we have 
    \begin{align}
&\|z_{s}-x_F^*\| \leq R_02^{-s}, \label{eq:restart_conv_arg} \\
&F(z_s) - F(x_F^*) \leq \mu R_0^2\cdot2^{-2s-1} %\leq (F(x_0) - F(x_F^*))2^{-2s}. 
\label{eq:restart_conv_func}
    \end{align}
Moreover,  the total communication and oracle complexities are
\begin{equation}\label{eq:ACRNM_complexity}
    O\left( \sqrt{\frac{\beta}{\mu}}\log\frac{F(x_0) - F(x_F^*)}{\e} + \left(\frac{LR_0}{\mu}\right)^\frac{1}{3} \right).
\end{equation}

%\todo{Maybe move here the bound \eqref{eq:ACRNM_coverg_T} which depends on the number of communications.} 
\end{theorem}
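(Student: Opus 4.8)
The plan is to establish \eqref{eq:restart_conv_arg} and \eqref{eq:restart_conv_func} by induction on the restart counter $s$, treating Theorem \ref{Th:ACRNM_conv_convex} as the per-epoch guarantee and using $\mu$-strong convexity as the bridge that turns a small function-value gap into a shrinking distance to $x_F^*$. The complexity estimate \eqref{eq:ACRNM_complexity} will then follow by summing the iteration counts $t_s$ over all epochs needed to reach accuracy $\varepsilon$.

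First I would set up the induction. The base case $s=0$ is the hypothesis $\|z_0 - x_F^*\| \le R_0$. For the inductive step, suppose $\|z_{s-1}-x_F^*\|\le R_{s-1}$. Epoch $s$ starts Algorithm \ref{alg:ANPN} from $x_0 = z_{s-1}$ and runs it for $t_s$ iterations, so Theorem \ref{Th:ACRNM_conv_convex} applied with this starting point gives
\[
F(z_s) - F(x_F^*) \le \frac{98 L R_{s-1}^3}{t_s^3} + \frac{48\beta R_{s-1}^2}{t_s^2}.
\]
The heart of the argument is to plug in the definition of $t_s$ from \eqref{eq:restarts_it}: the bound $t_s \ge 2(196 L R_{s-1}/\mu)^{1/3}$ controls the cubic term, while $t_s \ge 4(24\beta/\mu)^{1/2}$ controls the quadratic term, so that each term is at most a constant multiple of $\mu R_{s-1}^2$ and their sum does not exceed the threshold $\tfrac{\mu}{2}R_s^2$. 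Strong convexity $\tfrac{\mu}{2}\|z_s - x_F^*\|^2 \le F(z_s) - F(x_F^*)$ then yields $\|z_s - x_F^*\| \le R_s = R_{s-1}/2$, closing the induction and proving \eqref{eq:restart_conv_arg}; rewriting $R_{s-1}^2 = R_0^2\,4^{-(s-1)}$ in the function-value estimate gives \eqref{eq:restart_conv_func}.

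For the complexity I would count the total number of iterations $\sum_{s=1}^S t_s$, where $S = O\!\left(\log\frac{F(x_0)-F(x_F^*)}{\varepsilon}\right)$ is the number of restarts required for \eqref{eq:restart_conv_func} to fall below $\varepsilon$ (the gap contracts by a factor $4$ per epoch). Using $\max\{a,b\}\le a+b$ splits the sum into two pieces. The $\beta$-piece, $2(24\beta/\mu)^{1/2}$, is independent of $s$ and so contributes $O(\sqrt{\beta/\mu}\,S) = O\!\left(\sqrt{\beta/\mu}\,\log\frac{F(x_0)-F(x_F^*)}{\varepsilon}\right)$. The $L$-piece carries the factor $R_{s-1}^{1/3}\propto 2^{-(s-1)/3}$, so it is a convergent geometric series dominated by its first term $O((LR_0/\mu)^{1/3})$. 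Summing the two pieces gives \eqref{eq:ACRNM_complexity}; since each iteration consists of two communication rounds, the communication complexity differs only by a constant.

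The main obstacle I expect is the calibration in the inductive step: one must verify that the specific constants in \eqref{eq:restarts_it} (the $196$ and the $24$) are large enough that the two error terms from Theorem \ref{Th:ACRNM_conv_convex} are \emph{simultaneously} driven below $\tfrac{\mu}{2}R_s^2$, since this is precisely what guarantees the exact factor-two contraction rather than mere geometric decay with some other ratio. Once that calibration is pinned down, the remaining work—the geometric summation of the $L$-term and the logarithmic count of epochs—is routine.
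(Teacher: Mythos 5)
Your proposal follows essentially the same route as the paper's proof: induction on the restart index, strong convexity to convert the per-epoch function-value bound of Theorem \ref{Th:ACRNM_conv_convex} into the distance contraction $\|z_s-x_F^*\|\le R_{s-1}/2$, and a split of $\sum_s t_s$ into a geometrically decaying $L$-part dominated by its first term plus a $\beta$-part that is constant across the $S=O\bigl(\log\frac{F(x_0)-F(x_F^*)}{\varepsilon}\bigr)$ restarts. The calibration you flag as the crux is indeed the only nontrivial step; carrying it out with the stated constants gives $\frac{2}{\mu}\bigl(F(z_s)-F(x_F^*)\bigr)\le \frac{1}{8}R_{s-1}^2+\frac{1}{4}R_{s-1}^2=\frac{3}{8}R_{s-1}^2$ rather than the needed $\frac{1}{4}R_{s-1}^2$, so an exact factor-two contraction requires marginally larger constants in \eqref{eq:restarts_it} (the paper's own displayed chain has the same slip), though this affects neither the structure of the induction nor the order of \eqref{eq:ACRNM_complexity}.
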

\begin{proof}
We prove by induction that $\|z_s-x_F^*\|\leq 2^{-s}\|x_0-x_F^*\| \leq R_s = 2^{-s}R_0$. For $s=0$ this obviously holds. By strong convexity and inequality \eqref{eq:conv_rate_convex}, we obtain that  
\begin{gather*}
    \|z_{s+1}-x_F^*\|^2\leq \frac{2}{\mu}(F(x_{t_{s}+1}) - F(x_F^*)) \\
    \leq \frac{2}{\mu} \left(\frac{98L\|z_s - x_F^*\|^3}{t_{s+1}^3} + \frac{48\beta\|z_s-x_F^*\|^2}{t_{s+1}^2} \right)\\
    \leq \frac{2}{\mu} \left(\frac{98LR_s^3}{\left(2\left(\frac{196LR_{s}}{\mu}\right)^\frac{1}{3}\right)^3} + \frac{48\beta R_s^2}{\left(4\left(\frac{24\beta}{\mu} \right)^\frac{1}{2}\right)^2} \right) \leq \frac{R_s^2}{4} = R_{s+1}^2.
\end{gather*}
Thus, by induction, we obtain that \eqref{eq:restart_conv_arg}, \eqref{eq:restart_conv_func} hold.

Next we provide the corresponding complexity bounds. From the above induction bounds, we obtain that after $S$ restarts the total number of iterations of Algorithm \ref{Th:ACRNM_conv_convex}, each requiring one call of the second-order oracle, two calls of the first-order oracle and two communication rounds, is no greater than
\begin{gather*}
    \sum_{s=1}^{S} t_{s} \leq 2\left(\frac{196 L R_{0}}{\mu}\right)^{\frac{1}{3}} \sum_{s=1}^{S} 2^{\frac{1-s}{3}}+4 S\left(\frac{24 \beta}{\mu}\right)^{\frac{1}{2}} \leq \\
    8\left(\frac{392 L R_{0}}{\mu}\right)^{\frac{1}{3}}+4 \sqrt{\frac{24\beta}{\mu}} \log _{4}\left[\frac{F\left(x_{0}\right)-F\left(x_F^*\right)}{\varepsilon}\right].
\end{gather*}
Therefore, the total communication and oracle complexities are given by \eqref{eq:ACRNM_complexity}.
\end{proof}

Let us now translate the bound \eqref{eq:restart_conv_arg} to the language of the number of iterations of Algorithm \ref{Th:ACRNM_conv_convex} and the number of communication rounds. Let $T$ be an even number of communications, which means that we made $t=T/2$ iterations of Algorithm \ref{Th:ACRNM_conv_convex}. Let $\tau_1=2\left(\frac{196LR_{0}}{\mu}\right)^\frac{1}{3}$ and $\tau_2=4\left(\frac{24\beta}{\mu} \right)^\frac{1}{2}$. Then $t_s$ in Algorithm \ref{alg:restarts} satisfies $t_s \leq \max\{\tau_1,\tau_2\}$ and after  $T$ communication rounds, this algorithm makes $s \geq \lfloor\frac{T}{2\max\{\tau_1,\tau_2\}} \rfloor$  restarts and generates a point $x^T=z_{s}$ such that 
%\todo{check, maybe division by 2 is needed in the last bound}
\begin{equation}\label{eq:ACRNM_coverg_T}
    \|x^T-x_F^*\| \leq R_02^{-\lfloor\frac{T}{2\max\{\tau_1,\tau_2\}} \rfloor}\leq R_02^{- \frac{T}{2\max\{\tau_1,\tau_2\}}}.
\end{equation}

At this point it is convenient to compare the complexity bound \eqref{eq:ACRNM_complexity} with the bounds in the literature. 
Firstly, in terms of the dependence on $\beta$ and $\mu$ our algorithm achieves the lower bound \eqref{LB} obtained in  \cite{arjevani2015communication}. Secondly, we compare our bound with the bound \eqref{DISCO} of the DISCO algorithm \cite{Zhang2018}, which unlike other works \cite{pmlr-v32-shamir14,reddi2016aide,JMLR:v21:19-764,pmlr-v119-hendrikx20a,dragomir2019optimal,sun2019convergence,hendrikx2020statistically,dvurechensky2021hyperfast} also achieves  the lower bound in terms of the dependence on $\beta$ and $\mu$. Since the dependence on these parameters in \eqref{DISCO} and in our bound \eqref{eq:ACRNM_complexity} are the same, we compare the other parts of the complexity bound. 

Let us denote $\Delta F(x_0) = F(x_0)-F(x_F^*)$. Using the fact, that a $\mu$-strongly-convex function with $L$-Lipschitz Hessian is self-concordant with constant $M = \dfrac{L}{(2\mu^{3/2})}$, we can rewrite our bound as
\begin{gather*}
O\left(\left(\frac{LR_0}{\mu}\right)^{1/3}\right) \leq  O \left(\left(\frac L {\mu}  \left( \frac { \Delta F(x_0)}\mu \right)^{1/2}\right)^{1/3}\right) \\
=    O\left( \left(\frac L {\mu^{3/2}}    \left( \Delta F(x_0)\right)^{1/2}\right) ^{1/3}\right) = O\left(( M^2 \Delta F(x_0))^{1/6}\right).
\end{gather*}
The corresponding part of the bound \eqref{DISCO} for the DISCO algorithm is much worse:
\[
  O\left(M^2 \Delta F(x_0)\sqrt{\frac{\beta}{\mu}}\right).
\]

%\todo{I think the comparison with DISCO should be here.}

\section{Application to Stochastic Optimization Problem}\label{sec:4}

In this section we return back to the stochastic optimization problem \eqref{eq:stoch_problem}. As it was described in subsection \ref{S:finite_sum_approx}, if the regularization parameter $\mu$ in \eqref{MC1} is chosen as $\mu = \widetilde{O} \left(\sqrt {\frac{L_0^2}{mn R^2}}\right)$ (Note that in this case $N=mn$, then an approximate solution to problem \eqref{MC1} is also an approximate solution to the stochastic optimization problem \eqref{eq:stoch_problem}. We  apply the algorithm from the previous section to solve problem \eqref{MC1} which satisfies assumptions of Theorem \ref{Th:ACRNM_conv_str_convex}. Combining the bound of this theorem with the bound \eqref{eq:bound_fin_sum_to_bound_stoch}, we obtain that the point $x^T$ generated by Algorithm \ref{alg:restarts} after $T$ rounds of communications satisfies
% \todo{Maybe get rig of $\delta$ by taking expectation or using the result of \url{https://arxiv.org/abs/1902.10710}.}
\begin{align*}\label{eq:rate_stoch}
 &\boldsymbol{F}(x^T) - \boldsymbol{F}(x^*) \leq \widetilde{O}\left(
 %\sqrt{\frac{L_0^2 R^2}{\delta N}} 
  L_0 R_02^{- \frac{T}{2\max\{\tau_1,\tau_2\}}} + {\frac{L_0 R}{\sqrt{N}}}\right)\\
 & = \widetilde{O}\left(L_0 R_02^{- \min\{T\left(\frac{\mu}{LR_{0}}\right)^\frac{1}{3},T\left(\frac{\mu}{\beta} \right)^\frac{1}{2}\} } + \sqrt{\frac{L_0^2 R^2}{N}}\right). 
\end{align*}

Further, substituting the value of $\mu$,  the value of $\beta$ from \eqref{eq:stat_sim} (see also Assumption \ref{as:similarity}), and omitting all the constants except $N$, $m$, $T$, $n=N/m$, we obtain %\todo{check}
\begin{equation}
    \begin{aligned}\label{eq:rate_stoch_simple}
     \boldsymbol{F}(x^T) - \boldsymbol{F}(x^*)  \leq& \\
     %\widetilde{O}\left( 2^{- \min\{T/N^\frac{1}{6},T/m^\frac{1}{4}\} } + \frac{1}{\sqrt{N}}\right)\\
     %&=O\left(\frac{1}{\sqrt{N}} + \exp\left(-C \min\{T/N^{1/6},T/m^{1/4}\}\right)\right)\\
      \exp\left(-\min\left\{\frac{C_1T}{N^{1/6}}, \frac{C_2T}{m^{1/4}}\right\}\right)& + \frac{C_3}{\sqrt{N}} %\widetilde{O}\left(\frac{1}{\sqrt{N}}\right), 
    \end{aligned}
\end{equation}
where constants $C_1, C_2, C_3$ depend on the parameters $L_0,L,R_0$ and logarithms of other parameters.

%\todo{Comparison with other offline bounds and online lower bounds.}

From the bound $\eqref{eq:rate_stoch_simple}$ we can obtain the dependence of the number of communication rounds $T$ and number of workers $m$ on the total number of observations $N$. We consider the case of full parallelization, i.e. when we use as many workers as possible and perform as less as possible communication rounds. The RHS of \eqref{eq:rate_stoch_simple} consists of two terms, and only the first one, that comes from the solution of the finite-sum approximation \eqref{MC1}, depends on $T$. So we would like to choose the number of communications such that both terms have the same order. Otherwise, the first term will either be larger than the second one, which means, that we have performed not enough communication rounds, or less, which implies that we have made too many communication rounds, and that does not improve the convergence. Therefore,  we get $T \simeq \max \{N^{1/6}, m^{1/4}\}$. Recall, that we also  would like to maximize $m$. If we choose $m \gtrsim N^{2/3}$, we will have $T \simeq m^{1/4}$. Hence, the number of communication rounds will increase with the number of workers. Therefore, the best possible choice is $m \simeq N^{2/3}$ and $T \simeq m^{1/4}$.

Communication requirements in terms of $T$ and $m$ for different approaches to solve $\eqref{StocProb}$ are presented in Table \ref{tab:comp}. One can see that our result is better than the lower bound for stochastic optimization $\eqref{Wood2021}$ in both $T$ and $m$. Compared to other state of the art approaches our method outperforms them either in number of communications or number of workers. 

In the case when the original stochastic problem $\eqref{eq:stoch_problem}$ is $\mu$-strongly convex, we no longer need to add regularization to have  convergence. From $\eqref{eq:bound_strong}$ and $\eqref{eq:ACRNM_coverg_T}$ we have
\begin{gather*}
     \EE \boldsymbol{F}(x^T) - \boldsymbol{F}(x^*) \leq 
      \\
      \exp\left(-\min \left\{C_1 T\mu^{1/3}, C_2T\mu^{1/2}n^{1/4}\right\}\right) + \left(\frac{C_3}{\mu N}\right),
\end{gather*}
where constants $C_1, C_2, C_3$ depend on the parameters $L_0,L,R_0$ and logarithms of other parameters.

\bibliographystyle{IEEEtran.bst}
\bibliography{literature,all_refs3}

\appendix

\subsection{SOTA approaches for distributed stochastic optimization}
\label{S:SOTA}

In the recent paper \cite{woodworth2021minmax} a novel lower bound for the SA approach was obtained:
\begin{equation}\label{eq:app_lb}
     \EE{\boldsymbol{F}(x^T)} - \boldsymbol{F}(x^*) \geq  \frac{LR^2}{\left(N/m\right)^2} + \frac{\sigma R}{\sqrt{N}} + \min\left\{\frac{LR^2}{T^2},\frac{\sigma R}{\sqrt{N/m}}\right\},
\end{equation}
where $N = nmT$, since in the SA approach we see each observation  once.
That bound is matched by a combination of two versions of Accelerated Gradient Descent \cite{woodworth2021minmax}. Recall that the convergence rate of batched accelerated SGD is 
$$
    O\left(\frac{LR^2}{t^2} + \frac{\sigma R}{\sqrt{t}r}\right),
$$
where $t$ is the number of iterations and $r$ is batch size. To obtain \eqref{Wood2021} one can consider two cases of the distributed setting:
\begin{itemize}
    \item single-machine $m=1,~t=\frac{N}{m},~r=1$:
    $$
        O\left(\frac{LR^2}{(N/m)^2} + \frac{\sigma R}{\sqrt{N/m}}\right);
    $$
    \item full batch $r = \frac{N}{T}, ~ t=T$:
    $$
        O\left(\frac{LR^2}{T^2} + \frac{\sigma R}{\sqrt{N}}\right).
    $$
\end{itemize}
The lower bound \eqref{eq:app_lb} is matched up to a logarithmic factor with the combination of these two regimes. 

There is also a lower bound for functions with Lipschitz Hessian , obtained by $\cite{woodworth2021minmax}$
$$
\frac{1}{\left(N/m\right)^2} + \frac{1}{\sqrt{N}} + \min\left\{\frac{1}{T^2},\frac{1}{\sqrt{N/m}},\frac{1}{\left(N/m\right)^{1/4}T^{7/4}}\right\}
$$
But it is not known whether it is accurate or not, since there is no method on which it reached.

In the paper \cite{godichon2020rates} authors get a better convergence rate considering non-accelerated parallelized SGD with specific step size and only one communication at the end,  assuming stronger local smoothness of objective near the solution
 $$O\left(\frac{1}{(N/m)} + \frac{1}{\sqrt{N}}\right).$$ 

Considering the SAA approach, one should note that optimization methods for original stochastic problem can also be applied. The most common example is stochastic gradient descent. Papers \cite{hardt2016train, lei2020fine} show that SGD, used to minimize the empirical risk of the model, also reduces the generalization error, if the number of iterations is not very large (linear in sample size $N$ \cite{hardt2016train}).

% This algorithm is $\e$-uniformly stable, which means that under the additional assumption of $L$-Lipschitz gradient the following inequality for the strongly convex $f(x, \xi)$ holds:
% $$
% |\frac{1}{N}\sum\limits_{i=1}^N f(x^K, \xi^i) - \boldsymbol{F}(x^K)| \leq \e = O\left(\frac{L_1^2}{\mu N}\right),
% $$
% where $x^K$ is the solution of SA problem \cite{hardt2016train, lei2020fine}. But still, in that approach and  the approach with regularization the number of data points should be huge. Moreover, we use them for the computation of stochastic gradients in different points $x$ a sufficiently large number of times. 

For the SAA approach Variance Reduction schemes can also be used.  In the paper \cite{woodworth2018graph} authors propose VR scheme that converges as follows
\begin{equation}\label{eq:app_vr_wood}
%2^{-\min\{mnT/N,nT/\sqrt{N}\}} + 1/ =
%   O(N^\frac{1}{4})
   \exp{\left(-\min \left \{\frac{C_1mnT}{N}, \frac{C_2L_0R}{L_1}\frac{nT}{\sqrt{N}} \right\}\right) } + \frac{C_3 L_0 R}{\sqrt{N}}, 
\end{equation}
where $C_1, C_2, C_3$ depends on logarithms of parameters $N, m, T$.
% Note, that in the formula above multiplier $O(N^{1/4})$ can be be moved to the exponent.

Accelerated VR algorithm from $\cite{li2020optimal}$ can be applied to the problem of the form \eqref{eq:MC1}.
% following form
% \begin{equation}\label{eq:app_saa2}
%     \frac{1}{m} \sum \limits_{i = 1}^m \frac{1}{K}\sum \limits_{j = 1}^K f (x, \xi^{i, j}) + \frac{\mu}{2}\|x\|^2 \to \min \limits_x.
% \end{equation}
In this case we have $N = nK$ total observations of stochastic gradient. For the convergence of that variance reduced method parameters $n, m, T$ must be selected in such a way that after $s$ iterations the following two conditions are satisfied
\begin{itemize}
    \item $ \sqrt{\dfrac{L}{\mu}} s \leq C_1 T ~~ \text{(stochastic updates)}$;
    \item $\left(K + \sqrt{K\dfrac{L}{\mu}}\right) s \leq C_2 nT  ~~ \text{(full gradient computation)}$
\end{itemize}
% \begin{itemize}
%     \item $T \geq \widetilde O\left(\sqrt{\frac{L}{\mu}}\right)$ rounds of communication,
%     \item $nT \geq \widetilde O\left(K + \sqrt{K\frac{L}{\mu}}\right)$ total computations of $\nabla f(x, \xi^{i, j})$ on each node
% \end{itemize}
to solve $\eqref{eq:MC1}$, where $C_1, C_2$ may depend on $K, n, m, T$ only logarithmically. Therefore, we get
$$
s \leq \min{\left\{\frac{C_1T}{\sqrt{L/\mu}}, \frac{C_2nT}{K + \sqrt{KL/\mu}}\right\}}.
$$
Using that we have fixed size number of observations $N = mK$ and $\mu = \widetilde{O}\left( \frac{L_0^2}{NR^2}\right)$, we obtain
$$s \leq \min{\left\{C_1\sqrt{\frac{L_0R}{L}}\frac{T}{N^{1/4}}, C_2\frac{mnT}{N}, C_2\sqrt{\frac{L_0R}{L}}\frac{\sqrt{m}nT}{N^{3/4}}\right\}}.$$
From \eqref{eq:bound_fin_sum_to_bound_stoch}, using the fact that convergence of this algorithm is linear, we have
% \aa{Finally, the convergence rate is
% $$\widetilde O(1) \left(\frac{1}{T^2} +\frac{1}{\sqrt{N}}  \right).$$}
\begin{equation}
    \begin{aligned}\label{eq:app_vr}
        \EE{\boldsymbol{F}(x^T)} - \boldsymbol{F}(x^*) \leq&  \left(\frac{1}{2}\right)^s + \widetilde{O}\left(\frac{L_0 R}{\sqrt N}\right) \leq \\
        \exp\left( - \min\left\{\sqrt{C_1\frac{L_0R}{L}}\frac{T}{N^{1/4}}\right.\right. &,\left.\left. C_2\frac{mnT}{N}, C_2\sqrt{\frac{L_0R}{L}}\frac{\sqrt{m}nT}{N^{3/4}}\right\} \right) \\
        + \widetilde{O}&\left(\frac{L_0 R}{\sqrt N}\right).
    \end{aligned}
\end{equation}

To compare these methods with the proposed one (Algorithms \ref{alg:ANPN}-\ref{alg:restarts}) and with the lower bound $\eqref{eq:app_lb}$ we derive dependence of parameters $T$ and $m$ on $N$, as we did before. Results are listed in the Table \ref{tab:comp}.

In the case of $\mu$-strong convexity of $f(x, \xi)$ w.r.t. $x$ for all $\xi$ offline bounds change since we don not need to regularize finite-sum approximation (see Section \ref{sec:2} for details). 
Therefore, convergence rate of Variance Reduction scheme  $\eqref{eq:app_vr_wood}$ from \cite{woodworth2018graph} changes to
\begin{gather*}
    \exp \left( - \min\left\{
    C_1\frac{nT}{L/\mu}, C_2\frac{mnT}{N}\right\} \right)+   O\left(\frac{L_0^2}{\mu N}\right)
\end{gather*}
And convergence rate $\eqref{eq:app_vr}$ of accelerated VR method \cite{li2020optimal} in the strongly convex case is
\begin{gather*}
    \exp \left( - \min \left \{
    \frac{C_1T}{\sqrt{L/\mu}},\frac{C_2T}{N/m}, \frac{C_2T}{\sqrt{(NL)/(m\mu)}}\right\} \right) +  O\left(\frac{L_0^2}{\mu N}\right).
\end{gather*}

\end{document}